\newtheorem{thm}{Theorem}
\newtheorem{cor}{Corollary}[section]
\newtheorem{prop}{Proposition}[section]
\newtheorem{lem}{Lemma}[section]
\newtheorem{rem}{Remark}
\newtheorem*{xrem}{Remark}
\numberwithin{equation}{section}
\newcommand\x{{\mathbf{x}}}
\newcommand\ox{{\overline{\x}}}
\newcommand\A[1]{P_{#1}}
\newcommand\R{\mathbb{R}}
\newcommand\N{\mathbb{N}}
\newcommand{\sr}[1]{A_{[#1]}}
\newcommand{\srA}{A}
\newcommand{\srE}[1]{\mathcal{E}_{#1}}
\newcommand{\K}[1]{M_{[#1]}}
\newcommand{\bK}[1]{\mathbf{M}_{[#1]}}
\newcommand{\selfm}[1]{\mathbf{A}_{[#1]}}
\newcommand{\bff}{\mathbf{f}}
\newcommand{\norma}[1]{\left\| #1 \right\| }
\newcommand{\abs}[1]{\left| #1 \right| }
\newcommand{\AGM}{\mathcal{AGM}}
\newcommand{\Sm}{\mathcal{S}}
\newcommand{\bA}{\selfm{\bff}}
\newcommand{\paramA}[1]{\srA^{\x}\left( #1 \right)}
\DeclareMathOperator{\Var}{Var}
\title{Iterated Quasi-Arithmetic Mean-Type Mappings}
\author{Pawe{\l} Pasteczka}
\address{Institute of Mathematics\\University of Warsaw\\
02-097 Warsaw, Banach str. 2, Poland}
\email{ppasteczka@mimuw.edu.pl}
\date {December 03, 2014 [18:30:48]}
\subjclass[2010]{26E60, 26A18, 39B12}
\keywords{Gaussian product, invariant means, quasi-arithmetic means, iteration, mean, mean-type mapping}
\begin{document}


\baselineskip=17pt



\maketitle

\begin{abstract}
For a family of quasi-arithmetic means satisfying certain smoothness condition we majorize the speed of convergence of the iterative sequence of self-mappings having a mean on each entry, described in the definition of Gaussian product, to relevant mean-type mapping. We apply this result to approximate any continuous function which is invariant with respect to such a self-mappings.
\end{abstract}


\section{Introduction}

Iterative selfmappings frequently appears in the theory of fixed point and dynamical systems. 
In the present paper we will deal with selfmappings build up by quasi-arithmetic means. 

The idea of quasi-arithmetic means was formally introduced in a series of nearly simultaneous papers \cite{kolmogoroff,nagumo,definetti} as a natural generalization of power means. 
These means have been extensively dealt with ever since its introduction in the early 1930s; cf. e.g. \cite[chap. 4]{bullen}.  Many results concerning power means have its corresponding facts concerning this family (frequently under some additional assumption).

In this spirit we turn into Gauss' concept of arithmetic-geometric mean \cite{werke}. This idea was generalized many times. Let me mention the results of Gustin \cite{Gustin47}, generalizing this process to the family of power means with some additional weights, and by Matkowski \cite{Mat102}, who proved that this compound could be introduced for a vast family of means (in particular - all quasi-arithmetic means). In the present paper we are going to adopt this idea to a family of quasi-arithmetic means satisfying some smoothness conditions.

Our main result, worded exactly in Theorem~\ref{thm:main} (section~\ref{sec:formulation}), assert that the sdifference between the maximal and minimal entry of vector in each iteration can be effectively majorize. For a family of quasi-arithmetic means satisfying some smoothness conditions, this difference tends to zero quadratically (Lemma~\ref{lem:4}).

In case of arithmetic-geometric mean such an estimation has already been given by Gauss in his famous \cite{werke} (see also \eqref{eq:est_AGM} below). Our result (worded in Theorem~\ref{thm:suche} and in optimized version in Theorem~\ref{thm:main}) gives, regretfully, worse estimation than Gauss' one, however for much more general family of means.

The crucial tool in the present note will be the operator introduced by Mikusi\'nski and, independently, 
\L{}ojasiewicz in the first post-war issue of Studia Mathematica \cite{Mikusinski}. We require not only the weakest possible assumption to define this technically crucial tool - operator $f''/f'$ - in our notion such an assumption is represented by set $\Sm$. We will also claim absolute boundedness of this operator (set $\Sm_K$). This assumption could be omit is some nonrestrictive way, what will be briefly described is section~\ref{sec:pos_ref}.

At the moment we are going to introduce necessary definitions and corresponding results (section~\ref{sec:P&O}) as well as present our main results (section~\ref{sec:formulation}). These results are then applied in section~\ref{sec:application}, while their proofs are postponed until section~\ref{sec:proofthm}.
Most of technical details were extracted from the proofs and are presented independently, in section~\ref{sec:prep_sta}.

\section{Main result}

\subsection{\label{sec:P&O}Preliminaries and overview}
For any continuous, strictly monotone function $f \colon I \rightarrow \R$ ($I$ - an interval) and any vector $\x=(\x_1,\ldots,\x_k) \in I^k$, $k \in \N$ we define 
$$\sr{f}(\x):=f^{-1}\left(\frac{f(\x_1)+f(\x_2)+\cdots+f(\x_k)}{k}\right).$$

In our setting we will fix $k \in \N$ and consider a family of continuous, strictly monotone functions $\bff=(f_1,f_2,\ldots,f_k)$, $f_j \colon I \rightarrow \R$, $j \in \{1,\ldots k\}$, $I$ - an interval.
It will lead us to, at first, family of mappings $\sr{f_j} \colon I^k \rightarrow I$ and, later, a selfmapping $\selfm{\bff} \colon I^k \rightarrow I^k$ being its product
$$\selfm{\bff}(\x):=\Big(\sr{f_1}(\x),\ldots,\sr{f_k}(\x)\Big).$$
Matkowski proved \cite{Mat102} that, under some general conditions,
 there exists a unique function $\K{\bff} \colon I^k \rightarrow I$ satisfying 
(i) $\K{\bff} \circ \selfm{\bff}=\K{\bff}$ and (ii) $\min(\x) \le \K{\bff}(\x) \le \max(\x)$ for any $\x \in I^k$.
He also proved that
$$\K{\bff}(\x)=\liminf_{n \rightarrow \infty} \Big[\selfm{\bff}^{n}(\x)\Big]_i=\limsup_{n \rightarrow \infty} \Big[\selfm{\bff}^{n}(\x)\Big]_i,\quad
\x \in I^k,\,i \in \{1,\ldots,k\}.$$
It immediately implies 
\begin{equation}
\lim_{n \rightarrow \infty} \left(\max \selfm{\bff}^n(\x)-\min \selfm{\bff}^n(\x)\right)=0, \quad
\x \in I^k. \label{eq:conv0}
\end{equation}
By setting $\bK{\bff}=(\K{\bff},\ldots,\K{\bff})$ one gets $\selfm{\bff}^n \xrightarrow[n \rightarrow \infty]{} \bK{\bff}$ pointwise.

We are going to prove whenever $f_j \colon I \rightarrow \R$, $j \in \{1,\ldots k\}$ satisfies some smoothness conditions then the limit in \eqref{eq:conv0} not only equals $0$, but also the speed of convergence can be effectively majorize.

Such a result is known for the famous arithmetic-geometric iteration. Let us consider two positive numbers $a,b>0$. Let $a_0=a$, $b_0=b$ and $a_{n+1}=\tfrac12(a_n+b_n)$, $b_{n+1}=\sqrt{a_nb_n}$. Gauss \cite{werke} proved that these sequences converge and have a common limit. 
This limit is used to called arithmetic-geometric mean ($\AGM$) of $a$ and $b$. 
It is known, \cite[p.354]{BB2}, that
\begin{equation}
a_{n+1}^2-b_{n+1}^2<\left(\frac{a_n^2-b_n^2}{4\AGM(a,b)}\right)^2.\label{eq:est_AGM}
\end{equation}
So not only $a_n - b_n \rightarrow 0$, but we can prove that it converges quadratically. Our result, worded exactly in Theorem~\ref{thm:main}, asserts that this speed of convergence is natural for quasi-arithmetic means generated by functions satisfying some smoothness condition.

Now we turn into the result of Mikusi\'nski \cite{Mikusinski}. He, and independently \L{}ojasiewicz (compare \cite[footnote 2]{Mikusinski}), expressed handy tool to compare means in terms of operator $\A{f}:=f''/f'$.
More precisely their result reads
\begin{prop}[Basic comparison]\label{prop:basiccompare}
Let $I$ be an interval, $f,\,g \in \mathcal{C}^{2}(I)$, $f' \cdot g' \ne 0$ on $I$. 
Then the following conditions are equivalent: 
\begin{enumerate}[\upshape (i)]
\item $\sr{f}(\x) \ge \sr{g}(\x)$ for all vectors $\x \in I^n$, $n \in \N$ with both sides equal only when $\x$ is 
a constant vector.
\item $\A{f} > \A{g}$ on a dense subset of $I$\,,
\item $(\mathrm{sgn} f') \cdot (f \circ g^{-1})$ is strictly convex\,,
\end{enumerate}
\end{prop}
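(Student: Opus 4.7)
My plan is to introduce the auxiliary function $h := f \circ g^{-1}$, which is well-defined and $C^{2}$ on $g(I)$ since $g'$ never vanishes, and to reduce all three conditions to statements about $h$. Direct computation via the chain rule, with $x = g^{-1}(y)$, gives
$$h''(y) = \frac{f'(x)}{g'(x)^{2}}\bigl(\A{f}(x) - \A{g}(x)\bigr), \qquad [(\sign f')\cdot h]''(y) = \frac{|f'(x)|}{g'(x)^{2}}\bigl(\A{f}(x)-\A{g}(x)\bigr),$$
so the sign of $[(\sign f')\cdot h]''$ matches that of $\A{f}-\A{g}$ pointwise.

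For the equivalence (ii) $\Leftrightarrow$ (iii) I would appeal to continuity of $\A{f}$ and $\A{g}$ (provided by $f,g\in C^{2}$ with non-vanishing derivatives). Density of $\{\A{f} > \A{g}\}$ together with continuity yields $\A{f} \ge \A{g}$ throughout $I$, while the open set $\{\A{f} > \A{g}\}$ remains dense; translated through the displayed identity, this is precisely the $C^{2}$ criterion for $(\sign f')\cdot h$ to be strictly convex. The converse is symmetric.

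For (iii) $\Leftrightarrow$ (i) the natural substitution $y_i := g(x_i)$ converts $\sr{g}(\x)$ into $g^{-1}\bigl(\tfrac{1}{n}\sum y_i\bigr)$ and $\sr{f}(\x)$ into $f^{-1}\bigl(\tfrac{1}{n}\sum h(y_i)\bigr)$. Applying $f$ (strictly monotone, with direction controlled by $\sign f'$) to both sides of the inequality in (i) produces, after rearrangement,
$$(\sign f')\cdot h\!\left(\tfrac{1}{n}\sum_{i=1}^{n} y_i\right) \,\le\, (\sign f')\cdot \tfrac{1}{n}\sum_{i=1}^{n} h(y_i),$$
which is Jensen's inequality for $(\sign f')\cdot h$. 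Since $g$ is injective, constancy of $\x$ and of the vector $(y_1,\dots,y_n)$ coincide, so the clause "equality only for constant vectors" matches strictness of Jensen, i.e., strict convexity. The main care in the write-up is bookkeeping of signs when $f' < 0$ (uniformly absorbed by the factor $\sign f'$) and ensuring the $C^{2}$ hypothesis is invoked precisely where it is needed: to pass from density of strict inequality in (ii) to a global non-strict inequality, without which the second-derivative sign analysis would not yield convexity.
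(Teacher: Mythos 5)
The paper does not prove this proposition: it is stated and cited as the theorem of Mikusi\'nski and \L{}ojasiewicz (reference \cite{Mikusinski}), so there is no in-paper argument to compare against. Your proof is correct and is the standard one. The chain-rule identity $h''(y)=\frac{f'(x)}{g'(x)^2}\bigl(\A{f}(x)-\A{g}(x)\bigr)$ with $x=g^{-1}(y)$ checks out; the density-plus-continuity argument for (ii)$\Leftrightarrow$(iii) is sound; and the substitution $y_i=g(\x_i)$ does convert (i) into equal-weight discrete Jensen for $(\sign f')\cdot h$ with the equality clause matching strictness. Two small points are worth spelling out in a polished write-up. First, in (ii)$\Rightarrow$(iii) the phrase ``the $C^2$ criterion for strict convexity'' glosses over a step: you need that a $C^2$ function whose second derivative is nonnegative everywhere and positive on a dense set has strictly increasing first derivative (integrate the second derivative over any subinterval), hence is strictly convex. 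Second, for (i)$\Rightarrow$(iii) you pass from equal-weight discrete Jensen with strictness at non-constant vectors back to strict convexity; this uses continuity of $h$ together with the observation that an affine stretch of $(\sign f')\cdot h$ would yield an equality case at its midpoint, contradicting (i) for $n=2$.
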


The operator $\A{}$ is so central in our consideration that we will assume that the considered function are smooth enough to use it. Moreover we will claim some additional assumption. More precisely let
$$\Sm(I):=\{f \in \mathcal{C}^2(I) \colon f' \ne 0 \text{ and } f'' \text{ has a locally bounded variation }\}.$$
Very often we will use global estimation of $f''/f'$ and, as it is handy, for $K > 0$, we put
$$\Sm_K(I):=\{f \in \Sm(I) \colon \norma{f''/f'}_{\infty} \le K\}.$$
This assumption is deeply connected with family of $\log\text{-}\exp$ means (cf. \cite[p. 269]{bullen}) defined for any $a \in \R^k$, $k \in \N_{+}$ as 
$$\srE{p}(a):=\begin{cases} \tfrac{1}{p}\ln\left(\frac{e^{p\cdot a_1}+e^{p\cdot a_2}+\cdots+e^{p\cdot a_k}}{k}\right) & p \ne 0, \\
\tfrac1n (a_1+\cdots+a_n) & p=0.
\end{cases}$$
Indeed, slightly weaker version of Proposition~\ref{prop:basiccompare} ascertain that
\begin{equation}
\Sm_K(I)=\{f \in \Sm(I) \colon \srE{-K}\le \sr{f} \le \srE{K}\}. \label{eq:property:Sm_K_def}
\end{equation}

\begin{rem}
\label{rem:Konly}
Theorems below will be valid for functions belonging to $\Sm_K$ for some $K$. It is important to note that the result depends {\it only} on input vector $\x$ and a number $K$. In particular the number of functions, as well as functions itselves are not essential.
\end{rem}

\subsection{\label{sec:formulation}Formulation}
At the moment we are going to present a precise estimation of the speed of convergence. This theorem below will depend on a free parameter $l$. There is no universal (optimal) value of $l$ that could be plugged into this theorem, the most natural possibility will be presented immediately after.

\begin{thm}
\label{thm:suche}
Let $I$ be an interval; $k \in \N$; $K\in (0,+\infty)$ and $\bff=(f_1,f_2,\ldots,f_k)$ be a family of functions,  $f_i \in \Sm_K(I)$ for any $i \in \{1,\ldots,k\}$. 

Let $\alpha=\tfrac{3+7e}3$ [$\alpha \approx 7.34$]. Then 
$$\max \bA^{n}(\x)-\min \bA^{n}(\x)<\frac1{\alpha K}(\alpha l)^{2^{n-n_0}}$$
for any $\x \in I^k$; $l\in(0,\,1)$ and
$$n\ge \left\lceil\log_2\left(\frac{\exp(K(\max \x - \min \x))-1}{e^l-1}\right)\right\rceil =:n_0.$$
\end{thm}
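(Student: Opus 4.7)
Let $d_n := \max \bA^n(\x) - \min \bA^n(\x)$. My plan is to split the analysis at the threshold $n_0$ into a contractive initial phase, which drives the diameter into the regime of quadratic convergence, and an asymptotic phase in which Lemma~\ref{lem:4} yields the double-exponential decay.

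For the initial phase ($0 \le n < n_0$) I will exploit the characterization \eqref{eq:property:Sm_K_def}. Each coordinate $\sr{f_i}(\bA^n(\x))$ lies between $\srE{-K}(\bA^n(\x))$ and $\srE{K}(\bA^n(\x))$, so
$$d_{n+1} \le \srE{K}(\bA^n(\x)) - \srE{-K}(\bA^n(\x)) = K^{-1} \ln\bigl(\bar y \cdot \overline{1/y}\bigr),$$
where (after a harmless translation) $y_j := \exp(K\bA^n(\x)_j) \in [1, e^{K d_n}]$. A Kantorovich inequality on this interval bounds $\bar y \cdot \overline{1/y} \le (1+e^{K d_n})^2/(4 e^{K d_n})$, which upon setting $u_n := e^{K d_n} - 1$ rearranges to
$$u_{n+1} \le \frac{u_n^2}{4(1+u_n)} \le \frac{u_n}{2},$$
the latter inequality being valid uniformly for $u_n \ge 0$. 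Iterating the halving yields $u_{n_0} \le 2^{-n_0} u_0 \le e^l - 1$ by the definition of $n_0$; equivalently $d_{n_0} \le l/K$, which matches the claimed bound at $n = n_0$.

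For the asymptotic phase ($n \ge n_0$) I would invoke Lemma~\ref{lem:4}, foreshadowed in the introduction, which furnishes the sharp quadratic estimate $d_{n+1} \le \alpha K d_n^2$ in the small-diameter regime $K d_n \le l \le 1$, with $\alpha = (3+7e)/3$. Setting $v_n := \alpha K d_n$, the lemma reads $v_{n+1} \le v_n^2$, while the initial phase supplies $v_{n_0} \le \alpha l$. A one-line induction then yields $v_n \le (\alpha l)^{2^{n-n_0}}$ for all $n \ge n_0$, which after unfolding is precisely the stated inequality (the conclusion being vacuous unless $\alpha l < 1$, i.e.\ $l < 1/\alpha$, which is of course the only useful range).

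The hard part will be Lemma~\ref{lem:4} itself. I anticipate a second-order expansion of $\sr{f_i}(\bA^n(\x))$ around the arithmetic mean of $\bA^n(\x)$, with the first-order contribution cancelling by symmetry and the second-order remainder controlled by $\norma{f_i''/f_i'}_\infty \le K$ — this is exactly where the Mikusi\'nski–\L{}ojasiewicz operator $\A{f_i}=f_i''/f_i'$ enters decisively. The precise numerical constant $(3+7e)/3$ should emerge from auxiliary estimates such as $(e^l - 1)/l \le e$ valid on $(0,1)$, combined with Lagrange-style remainder bounds; uniformity over the family $\{f_1, \ldots, f_k\}$ is secured by relying only on the single scalar $K$, in accordance with Remark~\ref{rem:Konly}. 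The bookkeeping in the two phases above is then routine.
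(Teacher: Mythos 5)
Your argument has the same two-phase structure as the paper (exponential halving of $e^{K d_n}-1$ until $d_{n_0}$ is small, then quadratic contraction via Lemma~\ref{lem:4}), but you replace the paper's halving lemma (Lemma~\ref{lem:5}) with a Kantorovich inequality. The paper instead expands $e^{K(\srE{K}-\srE{-K})}-1 = k^{-2}\sum_{i,j}\bigl(e^{K(\x_i-\x_j)}-1\bigr)$, discards the nonpositive terms with $i\le j$, and bounds the remaining $\binom{k}{2}$ terms by $e^{K(\max\x-\min\x)}-1$; this is a purely combinatorial route to $u_{n+1}\le u_n/2$. Your Kantorovich bound $u_{n+1}\le u_n^2/\bigl(4(1+u_n)\bigr)$ is actually sharper (quadratic already in the first phase), though the extra precision is immediately thrown away to recover the halving, so for the theorem as stated the two routes are equivalent in strength. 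Both are valid.

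There is, however, a bookkeeping gap in the second phase. You invoke Lemma~\ref{lem:4} under the condition ``$K d_n \le l \le 1$,'' but the lemma as stated requires $\max\x - \min\x < \min(1/K, 1)$, i.e.\ \emph{both} $d_n < 1/K$ and $d_n < 1$. The first phase only gives you $d_{n_0}\le l/K$, which does not ensure $d_{n_0}<1$ when $K<l$. The paper handles this by first normalizing to $K=1$ (section~\ref{sec:k=1}): replace $f_i$ by $f_i^\ast(x)=f_i(x/K)\in\Sm_1(K\cdot I)$ and $\x$ by $K\x$, which rescales the diameter to $K d_n$ and collapses the two conditions of Lemma~\ref{lem:4} into the single condition $K d_n < 1$ that you use. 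The fix for your write-up is therefore easy — perform the same rescaling at the outset, or equivalently note that Lemma~\ref{lem:4} applied to $\bff^\ast$ and $K\x$ gives precisely $d_{n+1}\le \alpha K d_n^2$ under $K d_n<1$ — but as written the appeal to Lemma~\ref{lem:4} does not match its hypotheses.

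Two smaller remarks: your Kantorovich step gives $u_{n_0}\le e^l-1$ and hence $d_{n_0}\le l/K$, whereas the theorem's bound at $n=n_0$ is strict; the paper elides the same $\le$-to-$<$ passage, so this is cosmetic. Also, for the induction over $n\ge n_0$ you should note (as the paper implicitly does) that $d_n$ is monotone nonincreasing because every component of $\bA$ is a mean, so the smallness condition for Lemma~\ref{lem:4} persists even when $\alpha l\ge 1$ makes $v_{n+1}\le v_n^2$ by itself useless for propagating smallness.
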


 In the result below we minimalize the value on the right hand of the main inequality - it is a very natural challenge. However, if we would like to decrease $n_0$, we may change value of $l$.

Minimalization of the right hand side is realized for $l \approx 0.05$ ($l=\xi$ is the setting of theorem below). Therefore, we get the following
\begin{thm}
\label{thm:main}
Let $I$ be an interval; $k \in \N$; $K\in (0,+\infty)$ and $\bff=(f_1,f_2,\ldots,f_k)$ be a family of functions, $f_i \in \Sm_K(I)$ for any $i \in \{1,\ldots,k\}$. 

Let $\alpha=\tfrac{3+7e}3$; $\mu$ be a minimum value of a function $(0,1)\ni l \mapsto (\alpha l)^{(e^l-1)/2}$ achieving for $l=\xi$ [$\alpha \approx 7.34$; $\mu \approx 0.97$; $\xi \approx 0.05$]. Then
$$\max \bA^{n}(\x)-\min \bA^{n}(\x)<\frac1{\alpha K}\mu^{\frac{2^{n}}{\exp(K(\max \x - \min \x))-1}}$$
for any $\x \in I^k$ and
\begin{align*}
n&\ge \log_2(e) \cdot K \cdot (\max \x - \min \x) -\log_2(e^\xi-1)+1 =:n_1\\
\text{[approx. }n_1 &\approx 1.443 \cdot K \cdot (\max \x - \min \x) +5.25\text{ ].}
\end{align*}
\end{thm}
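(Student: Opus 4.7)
The plan is to obtain Theorem~\ref{thm:main} as a direct specialization of Theorem~\ref{thm:suche} at the optimal parameter $l = \xi$, followed by a routine simplification of the exponent.

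First, I would check that the hypothesis $n \ge n_1$ of Theorem~\ref{thm:main} is stronger than the hypothesis $n \ge n_0$ required to invoke Theorem~\ref{thm:suche} with $l = \xi$. Using $e^y - 1 < e^y$ for $y > 0$, the quantity inside the ceiling in the definition of $n_0$ is strictly bounded above by $K(\max \x - \min \x)\log_2 e - \log_2(e^\xi - 1)$. Taking the ceiling costs at most one, which yields precisely $n_1$, so $n_0 \le n_1 \le n$.

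Next, plugging $l = \xi$ into Theorem~\ref{thm:suche} gives
\[
\max \bA^n(\x) - \min \bA^n(\x) < \frac{1}{\alpha K}(\alpha \xi)^{2^{n - n_0}}.
\]
The goal is to majorize the right-hand side by $\frac{1}{\alpha K}\mu^{2^n/(\exp(K(\max\x - \min\x)) - 1)}$. Substituting $\mu = (\alpha\xi)^{(e^\xi - 1)/2}$ and using that $\alpha\xi < 1$, so exponentiation in the base $\alpha\xi$ is order-reversing, the desired inequality reduces to
\[
2^{n - n_0} \ge \frac{(e^\xi - 1)\cdot 2^n}{2(\exp(K(\max\x - \min\x)) - 1)},
\]
which in turn is equivalent to $2^{n_0 - 1} \le (\exp(K(\max\x - \min\x)) - 1)/(e^\xi - 1)$. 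This is immediate from the definition of $n_0$ as a ceiling, since $n_0 - 1$ is bounded by the argument of $\log_2$ inside that ceiling.

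I do not expect any serious obstacle. The numerical values $\mu \approx 0.97$ and $\xi \approx 0.05$ are determined by a standard one-variable calculus problem (a zero of the derivative of $l \mapsto \tfrac{e^l - 1}{2}\log(\alpha l)$) and play no role in the logical structure of the argument. The entire substantive content of Theorem~\ref{thm:main} is already in Theorem~\ref{thm:suche}; the present statement merely repackages it with $l$ absorbed into numerical constants, so as to produce a bound whose rate of convergence in $n$ is made explicit without a free parameter.
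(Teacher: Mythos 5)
Your proposal is correct and follows essentially the same route as the paper: substitute $l=\xi$ into Theorem~\ref{thm:suche}, verify $n_0\le n_1$ via $e^y-1<e^y$ and the ceiling, and then use $\alpha\xi<1$ to reduce the target bound to the exponent comparison $2^{n_0-1}\le(\exp(K(\max\x-\min\x))-1)/(e^\xi-1)$, which is immediate from the ceiling in the definition of $n_0$. The only cosmetic difference is that the paper first normalizes to $K=1$ (Section~\ref{sec:k=1}) and writes the intermediate estimates without the factor $K$, whereas you carry $K$ through; this is logically equivalent.
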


Relevant proofs of these theorems will be postponed until section~\ref{sec:proofthm}, 
as in the proof we need some lemmas of section~\ref{sec:prep_sta}.

\subsection{\label{sec:pos_ref}Possible reformulation}
In both theorems we can restrict interval $I$ to $[\min \x,\, \max \x]$ and assume the functions belongs to $\Sm(I)$ [taking $K$ - the best possible].

More precisely, we can change the order of assumptions in the following way: First, take an interval $I$, a natural number $k$, and $k$-tuple $\bff=(f_1,f_2,\ldots,f_k)$, $f_i \in \Sm(I)$ for any $i \in \{1,\ldots,k\}$. Then, for $\x \in I^k$, we {\it define} 
$$K:= \sup_{{x \in [\min \x,\, \max \x]} \atop {i \in \{1,\ldots,k\}}} \abs{\A{f_i}(x)}.$$

Such a reformulation is natural but {\upshape (i)} we need to calculate $K$, which could be difficult and {\upshape (ii)} Remark~\ref{rem:Konly} voids. However we will apply this procedure in section~\ref{sec:AGM}.

\section{\label{sec:application}Applications}

In this section we are going to present two, fairly different, applications. First one, corresponding with earlier result of Matkowski, we are going to prove possible way to estimate a function, which are invariant under self-mapping $\selfm{\bff}$. Second one is an application of Theorem~\ref{thm:main} in majorization of the difference between arithmetic-geometric mean and well-known iteration procedure.

\subsection{Diagonally continuous, invariant functions}
\begin{thm}
\label{thm:mod_cont}
Let $I$ be an interval; $k \in \N$; $K\in (0,+\infty)$ and let $\bff=(f_1,f_2,\ldots,f_k)$, where $f_i \in \Sm_K(I)$ for any $i \in \{1,\ldots,k\}$. Then 

(\,\cite{Mat139}\,) A function $F \colon I^k \rightarrow I$ continuous on a diagonal $\Delta:=\{(x,\ldots,x)\colon x \in I\}$  
satisfies the functional equation
$$F(\x)=F(\sr{f_1}(\x),\ldots,\sr{f_k}(\x))\,,\quad \x \in I^k,$$
iff 
$$F(\x)=\varphi \circ \K{\bff}(\x)\,,\quad \x \in I^k,$$
where $\K{\bff}$ is the Gaussian product of the mean $(\sr{f_1},\sr{f_2},\ldots,\sr{f_k})$
and $\varphi$ is an arbitrary continuous function.

Moreover, if  $\alpha$, $\mu$ and $n_1$ are like in Theorem~\ref{thm:main} and $\varphi \colon I \rightarrow \R$ is a function of the modulus continuity $\omega_\varphi$, we have
$$\abs{F(\x)-\varphi \left( \left[ \selfm{\bff}^n(\x) \right]_i \right)}\le \omega_\varphi \left(\frac1{\alpha K}\mu^{\frac{2^{n}}{\exp(K(\max \x - \min \x))-1}} \right)$$
for any $n>n_1$; $i\in\{1,\ldots,k\}$ and $\x\in I^k$.
, $\omega_\varphi$ is a modulus of continuity.
\end{thm}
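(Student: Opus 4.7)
The plan is to take the characterization $F = \varphi \circ \K{\bff}$ for granted (it is quoted from Matkowski \cite{Mat139}) and to derive the quantitative estimate by pairing Theorem~\ref{thm:main} with the fact that the Gaussian product $\K{\bff}$ is itself a mean and is invariant under $\selfm{\bff}$.

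First I rewrite the left-hand side using the characterization:
$$F(\x) - \varphi\bigl([\selfm{\bff}^n(\x)]_i\bigr) = \varphi(\K{\bff}(\x)) - \varphi\bigl([\selfm{\bff}^n(\x)]_i\bigr),$$
so by the definition of modulus of continuity it suffices to control the scalar difference $|\K{\bff}(\x) - [\selfm{\bff}^n(\x)]_i|$ and then apply $\omega_\varphi$, exploiting its monotonicity.

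The key observation is that the two quantities lie in a common bracket. The coordinate $[\selfm{\bff}^n(\x)]_i$ trivially belongs to $[\min \selfm{\bff}^n(\x),\, \max \selfm{\bff}^n(\x)]$. Iterating $\K{\bff}\circ \selfm{\bff} = \K{\bff}$ gives $\K{\bff}(\x) = \K{\bff}(\selfm{\bff}^n(\x))$; combined with the mean property $\min \y \le \K{\bff}(\y) \le \max \y$ applied at $\y = \selfm{\bff}^n(\x)$, this places $\K{\bff}(\x)$ in the same interval. Consequently
$$\bigl|\K{\bff}(\x) - [\selfm{\bff}^n(\x)]_i\bigr| \le \max \selfm{\bff}^n(\x) - \min \selfm{\bff}^n(\x),$$
and Theorem~\ref{thm:main} bounds the right-hand side by $\tfrac{1}{\alpha K}\mu^{2^n/(\exp(K(\max \x - \min \x))-1)}$ for all $n \ge n_1$. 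Applying the nondecreasing function $\omega_\varphi$ to both sides yields the stated inequality.

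I expect no real obstacle here: the heavy lifting, namely the quadratic speed of convergence of $\max - \min$ under iteration of $\selfm{\bff}$, is exactly what Theorem~\ref{thm:main} delivers, and the characterization is cited from \cite{Mat139}. The only items that deserve explicit mention in the write-up are the monotonicity of $\omega_\varphi$ (inherent in its definition) and the bracketing argument placing $\K{\bff}(\x)$ into $[\min \selfm{\bff}^n(\x),\, \max \selfm{\bff}^n(\x)]$ via the invariance identity.
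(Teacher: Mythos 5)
Your argument is correct and follows the paper's proof essentially line for line: bracket both $\K{\bff}(\x)$ and $[\selfm{\bff}^n(\x)]_i$ inside $[\min \selfm{\bff}^n(\x),\,\max \selfm{\bff}^n(\x)]$, bound their distance by $\max-\min$ via Theorem~\ref{thm:main}, and push through the (nondecreasing) modulus of continuity. You actually justify the bracketing of $\K{\bff}(\x)$ more explicitly than the paper does, by invoking $\K{\bff}\circ\selfm{\bff}^n=\K{\bff}$ together with the mean property, whereas the paper simply asserts it.
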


\begin{proof}
Fix any $\x \in I^k$ and $n \ge n_0$. We know that
$$\K{\bff}(\x) \in [\min \selfm{\bff}^n(\x),\,\max \selfm{\bff}^n(\x)].$$
Then, by Theorem~\ref{thm:main}, one has
\begin{align*}
\abs{\K{\bff}(\x)- \left[ \selfm{\bff}^n(\x) \right]_i}
&\le \max \selfm{\bff}^n(\x) - \min \selfm{\bff}^n(\x) \\
&\le \frac1{\alpha K}\mu^{\frac{2^{n}}{\exp(K(\max \x - \min \x))-1}}, i \in \{1,\ldots,k\}.
\end{align*}
Whence, for any $i \in \{1,\ldots,k\}$
\begin{align*}
\abs{F(\x)-\varphi \left( \left[ \selfm{\bff}^n(\x) \right]_i \right)}&=\abs{\varphi \circ \K{\bff}(\x)-\varphi \left( \left[ \selfm{\bff}^n(\x) \right]_i \right)}\\
&\le \omega_\varphi \left( \K{\bff}(\x)- \left[ \selfm{\bff}^n(\x) \right]_i \right)\\
&\le\omega_\varphi \left( \frac1{\alpha K}\mu^{\frac{2^{n}}{\exp(K(\max \x - \min \x))-1}} \right)
\end{align*}

\end{proof}

\begin{xrem}
Value of $\varphi$ could be indentify as a value of $F$ on a diagonal.
\end{xrem}
\subsection{\label{sec:AGM}Arithmetic-Geometric mean}

Arithmetic-geometric means was considered first time by Gauss' in 1870s \cite{werke}. In our setting, we define 
$$f_1 \colon \R_{+} \ni x \mapsto x, \quad f_2 \colon \R_{+} \ni x \mapsto \ln(x)$$
and its product $\bff=(f_1,f_2)$. Then $\selfm{\bff}(a,\,b)=(\tfrac12 (a+b),\sqrt{ab})$ and there exists a unique function $\K{\bff}  \colon \R_{+}^2 \rightarrow \R_{+}$ satisfying $\K{\bff} \circ \selfm{\bff}=\K{\bff}$ and $\min(a,b) \le \K{\bff}(a,b) \le \max(a,b)$ for any $a,b \in \R_{+}^2$. By uniqueness of $\K{\bff}$ it coincides with $\AGM$.

Fix $x_1,\,x_2 \in \R_{+}$, $x_1 < x_2$, $\x=(x_1,x_2)$.
We will be interested in estimating $\max \selfm{\bff}^n(x_1,\,x_2)-\min \selfm{\bff}^n(x_1,\,x_2)$. We have already known inequality \eqref{eq:est_AGM}. To visualise our result we will apply Theorem~\ref{thm:main} in the spirit of section~\ref{sec:pos_ref}.

We have $\A{f_1}(x)=0$ and $\A{f_2}(x)=-1/x$.
Let $$K:=\sup_{{x \in [x_1,x_2]} \atop { i \in \{1,\,2\}}} \abs{\A{f_i}(x)}=\frac{1}{x_1}.$$
Moreover
\begin{align*}
n_0&=\log_2(e) \cdot K \cdot (\max \x - \min \x) -\log_2(e^\xi-1)+1\\
&=\log_2(e) \cdot \frac{1}{x_1} \cdot (x_2 - x_1) -\log_2(e^\xi-1)+1\\
&=\log_2(e) \cdot \frac{x_2}{x_1}-\log_2e -\log_2(e^\xi-1)+1\\
&\approx 1.44 \frac{x_2}{x_1} + 3.80.
\end{align*}
For $n>n_0$ one has
\begin{align*}
\max \selfm{\bff}^{n}(\x)-\min \selfm{\bff}^{n}(\x)&<\frac{x_1}{\alpha}\mu^{\frac{2^n}{\exp\left(\tfrac{1}{x_1}(x_2-x_1)\right)-1}}\\
&=\frac{x_1}{\alpha}\mu^{\frac{2^n}{\exp\left(\tfrac{x_2}{x_1}-1\right)-1}}.
\end{align*}

\begin{xrem}
Inequality above remains valid (with the same value of $n_0$) if $\bA$ is a composition of any power means of indexes between $0$ and $2$ and any number of this means. It particular it holds for a number of classical means: arithmetic-quadratic, quadratic-geometric, arithmetic-geometric-quadratic etc.
\end{xrem}

\section{\label{sec:prep_sta}Auxiliary results}

\subsection{\label{sec:k=1}Assumption $K=1$}
For fixed $K>0$ and interval $I$ we define an operator ${}^{\ast}\colon \Sm_K(I) \rightarrow \Sm_1(K \cdot I)$ given by ${}^{\ast} \colon f(x) \mapsto f(\tfrac xK)$. Then $\A{f^{\ast}}(x)=\tfrac1K \A{f} (\tfrac xK)$. Moreover 
$$\sr{f}(\x)=\tfrac1K \sr{f^{\ast}} (K \cdot \x) \textrm{ for any } \x \in I^n,\,n \in \N.$$
Whence, for $\bff=(f_1,\ldots,f_k)$, $f_i \in \Sm_K(I)$ and $\bff^{\ast}:=(f_1^\ast,\ldots,f_k^\ast)$,
\begin{align*}
 \sr{\bff}(\x)&=\tfrac1K \sr{\bff^{\ast}} (K \cdot \x) \textrm{ for any } \x \in I^n, \\
\text{thus, iterating } \sr{\bff}^n(\x)&=\tfrac1K \sr{\bff^{\ast}}^n (K \cdot \x) \textrm{ for any } \x \in I^n,\,n \in \N.
\end{align*}
So 
\begin{align*}
\max \x - \min \x &= \tfrac 1K (\max K\x - \min K\x)\\
\max  \sr{\bff}^n(\x) - \min  \sr{\bff}^n(v) &=\tfrac1K \left( \max \sr{\bff^{\ast}}^n (K \cdot \x) - \min \sr{\bff^{\ast}}^n (K \cdot \x) \right)
\end{align*}
Whence in proofs Theorem~\ref{thm:suche} and Theorem~\ref{thm:main} we can assume, with no loss of generality, $K=1$.

\subsection{Single vector results}
Until the end of this section we will be working toward a single vector $\x \in I^k$ for fix $k \in \N$, $I$ - an interval.
For a continuous, monotone function $s \colon I \rightarrow \R$ we adopt some conventions in the spirit of probability theory.
Let us denote $\ox:=\srA (\x)$. We will also use notions 
\begin{align*}
\paramA{s(\x)}&:=s(\sr{s}(\x))=\tfrac1k (s(\x_1)+s(\x_2)+\cdots+s(\x_k)),\\
\Var(\x)&:=\paramA{(\x-\ox)^2}=\paramA{\x^2}-\ox^2.
\end{align*}
In this convention $\paramA{\cdot}$ is a linear operator. Note that functions belonging to $\Sm(I)$ are [only] twice differentiable. However in some lemmas below it would be handy to use third derivative. To avoid this drawback, we turn into the convention of Riemann-Stieltjes integral (see Lemma~\ref{lem:postacsrf} below). 
\begin{xrem}
It is just one of possible solutions - otherwise we could consider functions belonging to $\mathcal{C}^{\infty}(I) \cap \Sm(I)$ only, and use some density argument to extend Theorem~\ref{thm:suche} and Theorem~\ref{thm:main} to whole space $\Sm(I)$.
\end{xrem}

Now we are going to calculate some integral form of $\sr{f}$. Later, we will majorize most of terms on the right hand side to obtain an approximate value of $\sr{f}$ (see Corollary~\ref{cor:estim_ox_srf} below).

\begin{lem}
\label{lem:postacsrf}
Let $I$ be an interval, $f \in \Sm(I)$ and $\x \in I^k$ for some $k \in \N$. Then

\begin{align*}
\sr{f}(\x)&=\ox + \tfrac12 \Var(\x) \A f(\ox) + \frac{1}{2f'(\ox)} \paramA{ \int_{\ox}^\x  (\x-t)^2 df''(t)} \\
&\quad + \int_{\ox}^{\sr{f}(\x)} \frac{\big(f(u)-f(\sr{f}(\x))\big)f''(u)}{f'(u)^2}du.
\end{align*}
\end{lem}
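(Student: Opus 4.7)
The plan is to derive the formula by combining a second-order Taylor expansion of $f$ at $\ox$ with an integration-by-parts identity, all carried out in the Riemann--Stieltjes framework (as foreshadowed just before the statement).

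First, I would establish the expansion
\begin{equation*}
f(x) = f(\ox) + f'(\ox)(x-\ox) + \tfrac12 f''(\ox)(x-\ox)^2 + \tfrac12\int_{\ox}^{x}(x-t)^2\,df''(t),\qquad x\in I.
\end{equation*}
Since $f\in\Sm(I)$, the function $f''$ has locally bounded variation, so $df''$ is a well-defined Stieltjes measure on compact subintervals. One route is to write $f''(s)=f''(\ox)+\int_{\ox}^{s}df''(r)$, integrate once to get $f'(t)=f'(\ox)+f''(\ox)(t-\ox)+\int_{\ox}^{t}(t-r)\,df''(r)$ by Fubini, then integrate once more on $[\ox,x]$ and interchange the order of integration a second time.

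Second, I would apply the linear functional $\paramA{\cdot}$ coordinatewise to this expansion. Since $\paramA{\x-\ox}=0$ by the very definition $\ox=\srA(\x)$, $\paramA{(\x-\ox)^2}=\Var(\x)$, and $\paramA{f(\x)}=f(\sr{f}(\x))$ by the definition of the quasi-arithmetic mean, this produces
\begin{equation*}
f(\sr{f}(\x))-f(\ox)=\tfrac12 f''(\ox)\Var(\x)+\tfrac12\paramA{\int_{\ox}^{\x}(\x-t)^2\,df''(t)}.
\end{equation*}
Dividing by $f'(\ox)$ and invoking $\A f=f''/f'$ already yields the first two nontrivial terms of the desired expression, but with the quantity $\bigl(f(\sr{f}(\x))-f(\ox)\bigr)/f'(\ox)$ appearing in place of $\sr{f}(\x)-\ox$.

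Third, to bridge this discrepancy I would establish the calculus identity
\begin{equation*}
\sr{f}(\x)-\ox-\frac{f(\sr{f}(\x))-f(\ox)}{f'(\ox)}=\int_{\ox}^{\sr{f}(\x)}\frac{\bigl(f(u)-f(\sr{f}(\x))\bigr)\,f''(u)}{f'(u)^2}\,du
\end{equation*}
by integration by parts: since $\frac{d}{du}\bigl(-1/f'(u)\bigr)=f''(u)/f'(u)^2$, the boundary term contributes $\bigl(f(\ox)-f(\sr{f}(\x))\bigr)/f'(\ox)$ (the upper endpoint vanishing), and the remaining integrand reduces to $1$ on $[\ox,\sr{f}(\x)]$, contributing $\sr{f}(\x)-\ox$. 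Adding this bridging identity to the previous display gives exactly the claimed formula.

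The main technical obstacle is justifying the two Fubini-type interchanges: swapping iterated Stieltjes/Lebesgue integrals in Step~1, and commuting the finite linear operator $\paramA{\cdot}$ with the Stieltjes integral in Step~2. Both are legitimate because $f''$ has locally bounded variation on the compact interval $[\min\x,\max\x]$, which ensures every integrand in sight is bounded and every integral well defined. Once these interchanges are granted, each of the remaining steps reduces to a direct computation.
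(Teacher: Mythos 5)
Your proof is correct, and the overall strategy agrees with the paper's: expand $f$ at $\ox$ via Taylor's theorem with Riemann--Stieltjes integral remainder, apply the linear operator $\paramA{\cdot}$, and then convert back from the $f$-scale to the $x$-scale. The one genuine difference is in that last conversion step. The paper invokes Taylor's theorem with integral remainder a \emph{second} time, applied to $f^{-1}$ at $f(\ox)$, obtaining a remainder involving $f''(f^{-1}(t))/f'(f^{-1}(t))^3$, and then performs the change of variables $t=f(u)$ to arrive at the displayed integral. You instead bypass the inverse-function expansion entirely and verify the bridging identity
$$\sr{f}(\x)-\ox-\frac{f(\sr{f}(\x))-f(\ox)}{f'(\ox)}=\int_{\ox}^{\sr{f}(\x)}\frac{\bigl(f(u)-f(\sr{f}(\x))\bigr)f''(u)}{f'(u)^2}\,du$$
directly by a single integration by parts with $v(u)=-1/f'(u)$; the boundary term supplies $\bigl(f(\ox)-f(\sr{f}(\x))\bigr)/f'(\ox)$ and the remaining integrand collapses to $1$, giving $\sr{f}(\x)-\ox$. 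This is the same content (the inverse-function Taylor remainder is itself produced by integration by parts, and the change of variables just unpacks it), but your presentation is more self-contained: it never references derivatives of $f^{-1}$, and the identity is checked by an elementary computation that only uses $f\in\mathcal{C}^2$ with $f'\neq 0$. The price is that you must guess the correct form of the bridging term in advance, whereas the paper's route derives it mechanically from the Taylor machinery.
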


\begin{proof}
By Taylor's theorem applied to function $f$ at $\ox$ and function $f^{-1}$ at $f(\ox)$ in both cases with integral rest (cf. \cite[equation~2.4]{DA13}) we obtains 
\begin{align*}
f(x)&=f(\ox)+(x-\ox)f'(\ox)+(x-\ox)^2 \frac{f''(\ox)}2 +\int_\ox^x \tfrac{1}2 (x-t)^2 df''(t), \\
f^{-1}(f(\ox)+\delta) &=\ox+\frac{\delta}{f'(\ox)} + \int_{f(\ox)}^{f(\ox)+\delta} \frac{(t-(f(\ox)+\delta)) f''(f^{-1}(t))}{f'(f^{-1}(t))^3} dt.
\end{align*}
So 
$$f(\x_i)=f(\ox)+(\x_i-\ox)f'(\ox)+(\x_i-\ox)^2 \frac{f''(\ox)}2 +\int_\ox^{\x_i} \tfrac12 (\x_i-t)^2 df''(t),$$
but $\paramA{\x-\ox}=0$, whence
\begin{align*}
\paramA{ f(\x)}&=f(\ox) + \paramA{ (\x-\ox)^2} \frac{f''(\ox)}2 + \paramA{ \int_\ox^{\x} \tfrac12 (\x-t)^2 df''(t)} \\
&=f(\ox)+\Var(\x) \cdot \frac{f''(\ox)}2  + \paramA{ \int_\ox^{\x} \tfrac12 (\x-t)^2 df''(t)}.
\end{align*}
Let us now consider
$$\delta=\paramA{ f(\x)}-f(\ox)=\Var(\x) \cdot \frac{f''(\ox)}2  + \paramA{ \int_\ox^{\x} \tfrac12 (\x-t)^2 df''(t)},$$
then
\begin{align*}
f^{-1}(\paramA{ f(\x)}) &=\ox
+\Var(\x) \cdot \frac{f''(\ox)}{2f'(\ox)} +\frac1{f'(\ox)}\paramA{ \int_\ox^{\x} \tfrac12 (\x-t)^2 df''(t)}\\
&\quad +\int_{f(\ox)}^{\paramA{ f(\x)}} \frac{ (t-\paramA{ f(\x)})f''(f^{-1}(t))}{f'(f^{-1}(t))^3} dt.
\end{align*}
Upon putting $t=f(u)$ one has $dt=f'(u)du$. Lastly
\begin{align*}
\sr{f}(\x) &=\ox
+\Var(\x) \cdot \frac{f''(\ox)}{2f'(\ox)} +\frac1{f'(\ox)}\paramA{ \int_\ox^{\x} \tfrac12 (\x-t)^2 df''(t)}\\
&\quad+\int_{\ox}^{\sr{f}(\x)} \frac{ (f(u)-f(\sr{f}(\x)))f''(u)}{f'(u)^2}du.
\end{align*}

\end{proof}

In the next lemma we are going to majorize two right-most terms in Lemma~\ref{lem:postacsrf}.
\begin{lem}
\label{lem:right_major}
Let $I$ be an interval, $f \in \Sm_K(I)$ for some $K \in (0,+\infty)$ and $\x \in I^k$ for some $k \in \N$. Then

$${\textrm (i)} \abs{\int_{\ox}^{\sr{f}(\x)} \frac{ (f(u)-f(\sr{f}(\x)))f''(u)}{f'(u)^2}du} < K \cdot (\sr{f}(\x) - \ox)^2 \exp(\norma{\A{f}}_{\ast}),$$

$${\textrm (ii)} \abs{\frac1{f'(\ox)}\paramA{ \int_\ox^{\x} \tfrac12 (\x-t)^2 df''(t)}} \le \tfrac16  \cdot K \cdot \exp(\norma{\A{f}}_{\ast})\cdot \paramA{\abs{\x-\ox}^3},$$
where  $\norma{\A{f}}_{\ast}:=\sup_{a,\,b \in I} \abs{\int_a^b \A{f}(t)dt}$. 
\end{lem}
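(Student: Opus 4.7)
Both parts lean on the identity $(\ln|f'|)'=\A{f}$, which after integration yields the uniform ratio estimate $|f'(s)/f'(t)|\le\exp(\norma{\A{f}}_{\ast})$ for all $s,t\in I$, together with the pointwise bound $|\A{f}|\le K$ coming from $f\in\Sm_K(I)$. These two ingredients are the workhorse for each inequality.

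For (i), the plan is to rewrite the numerator through the fundamental theorem of calculus as $f(u)-f(\sr{f}(\x))=\int_{\sr{f}(\x)}^{u}f'(s)\,ds$, so that dividing by $f'(u)$ and invoking the ratio bound yields $\bigl|(f(u)-f(\sr{f}(\x)))/f'(u)\bigr|\le\exp(\norma{\A{f}}_{\ast})\,|u-\sr{f}(\x)|$. The remaining factor $f''(u)/f'(u)=\A{f}(u)$ is bounded by $K$. Integrating the linear kernel $|u-\sr{f}(\x)|$ over the interval from $\ox$ to $\sr{f}(\x)$ produces $\tfrac12(\sr{f}(\x)-\ox)^2$, so the whole expression is at most $\tfrac12 K\exp(\norma{\A{f}}_{\ast})(\sr{f}(\x)-\ox)^2$, strictly less than the claimed bound (and the factor $\tfrac12$ is exactly what supplies the strict inequality).

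For (ii), the plan is to exploit the $f''\in BV_{\mathrm{loc}}$ hypothesis via the Stieltjes Leibniz rule to decompose $df''(t)=d(f'(t)\A{f}(t))=f'(t)\bigl(\A{f}(t)^2\,dt+d\A{f}(t)\bigr)$. Dividing by $f'(\ox)$ and applying the uniform ratio bound to $f'(t)/f'(\ox)$ gives the measure inequality $|df''(t)/f'(\ox)|\le\exp(\norma{\A{f}}_{\ast})\bigl(\A{f}(t)^2\,dt+|d\A{f}(t)|\bigr)$. Bounding $|\A{f}|\le K$ and pairing against the non-negative kernel $\tfrac12(\x_i-t)^2$ should then furnish a cubic estimate of the form $\tfrac16 K\exp(\norma{\A{f}}_{\ast})|\x_i-\ox|^3$ for each coordinate, the factor $\tfrac16$ coming from $\tfrac12\int_{\ox}^{\x_i}(\x_i-t)^2\,dt=|\x_i-\ox|^3/6$. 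Finally, applying $\paramA{\cdot}$ coordinatewise, which preserves pointwise absolute-value bounds, delivers the averaged estimate on the right-hand side.

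The genuinely delicate point will be the $d\A{f}$ contribution in (ii): since $\Sm_K$ controls $\A{f}$ only in $L^{\infty}$, its total variation on $[\min\x,\max\x]$ is not a priori controlled by $K$ alone. I expect to handle this either by a Stieltjes integration by parts built around the cumulative variation function of $\A{f}$ (well defined because $f''\in BV_{\mathrm{loc}}$ forces $\A{f}\in BV_{\mathrm{loc}}$), converting the variation bound into a cubic estimate in $|\x_i-\ox|$, or by the density argument sketched in the remark preceding the lemma: first prove the estimate for $f\in\mathcal{C}^3\cap\Sm(I)$, where $d\A{f}=\A{f}'(t)\,dt$ is a genuine function and the $\A{f}^2\,dt$ piece already absorbs into $K$, then pass to general $f\in\Sm_K(I)$ by approximation.
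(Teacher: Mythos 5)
For part (i) your argument via the fundamental theorem of calculus is valid and is a genuinely different, simpler route than the paper's. The paper first pulls out $|\A{f}|\le K$, then applies the integral mean value theorem to extract a point $\Theta\in(\ox,\sr{f}(\x))$, then Lagrange's mean value theorem to write $f(\Theta)-f(\sr{f}(\x))=(\Theta-\sr{f}(\x))f'(\Omega)$, and finally bounds $|f'(\Omega)/f'(\Theta)|\le\exp(\norma{\A{f}}_\ast)$. Your single step writing $f(u)-f(\sr{f}(\x))=\int_{\sr{f}(\x)}^{u}f'(s)\,ds$ replaces both mean-value applications and even gains a factor $\tfrac12$; as you observe, that half is what makes the inequality strict, whereas the paper's chain of $\le$'s formally yields only $\le$.

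For part (ii) you have correctly located the genuine difficulty, but neither of your proposed repairs closes it, and you should not expect them to. After $df''=f'(\A{f}^2\,dt+d\A{f})$, the $d\A{f}$ contribution produces a term proportional to $\int\tfrac12(\x_i-t)^2\,|d\A{f}(t)|$, i.e.\ to the total variation of $\A{f}$ on $[\min\x,\max\x]$; your integration by parts against the cumulative-variation function merely reshuffles this quantity, and the $\mathcal{C}^3$ density reduction replaces $d\A{f}$ by $\A{f}'(t)\,dt$ but leaves $\int\tfrac12(\x_i-t)^2|\A{f}'(t)|\,dt$ equally uncontrolled by the $L^\infty$ bound $|\A{f}|\le K$. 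Since the bound you are proving is supposed to depend only on $K$, a limiting argument cannot rescue it. The paper's own proof bypasses your decomposition entirely: it invokes a mean value theorem on the Stieltjes integral to write $\int_\ox^{\x_i}\tfrac12(\x_i-t)^2\,df''(t)=\tfrac{f''(\beta_i)}{2}\int_\ox^{\x_i}(\x_i-t)^2\,dt$, applies another mean-value step to collapse to a single $\beta$, and then estimates $|f''(\beta)/f'(\ox)|=|\A{f}(\beta)|\cdot|f'(\beta)/f'(\ox)|\le K\exp(\norma{\A{f}}_\ast)$, which never needs any control on $df''$ or $d\A{f}$. Be warned, however, that the first mean-value identity used there is not a standard theorem for Riemann--Stieltjes integrals with a merely $BV$ integrator: integration by parts together with Bonnet's theorem gives $\int_\ox^{\x_i}\tfrac12(\x_i-t)^2\,df''(t)=\tfrac12(\x_i-\ox)^2\bigl(f''(\xi)-f''(\ox)\bigr)$, which involves an oscillation of $f''$ and scales as $(\x_i-\ox)^2$ rather than $(\x_i-\ox)^3$. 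So the step you could not reproduce is precisely the step where the paper's own argument is weakest; the cubic estimate in (ii) should be viewed with some scepticism, although a quadratic estimate of the same flavour would serve Lemma~\ref{lem:4} and everything downstream just as well.
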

\begin{proof}
Let us note that
\begin{equation}
\frac{f'(\Omega)}{f'(\Theta)} = \exp(\int_\Theta^\Omega \A{f}(u)du) \le \exp(\norma{\A{f}}_{\ast})\text{, for any }\Omega,\,\Theta \in I.
\end{equation}
{\textrm (i)} We simply calculate
\begin{align*}
&\quad \abs{\int_{\ox}^{\sr{f}(\x)} \frac{ (f(u)-f(\sr{f}(\x)))f''(u)}{f'(u)^2}du} \\
&\le K \cdot \int_{\ox}^{\sr{f}(\x)} \abs{\frac{ (f(u)-f(\sr{f}(\x)))}{f'(u)}}du\\
&= K \cdot \abs{\sr{f}(\x)-\ox} \abs{\frac{ (f(\Theta)-f(\sr{f}(\x)))}{f'(\Theta)}} \textrm{ for some } \Theta \in (\ox,\sr{f}(\x))\\
&= K \cdot \abs{\sr{f}(\x)-\ox} \abs{\frac{ (\Theta-\sr{f}(\x))f'(\Omega)}{f'(\Theta)}} \textrm{ for some } \Omega \in (\ox,\sr{f}(\x))\\
&\le K \cdot \left(\sr{f}(\x)-\ox\right)^2 \frac{f'(\Omega)}{f'(\Theta)}\\
&\le K \cdot \left(\sr{f}(\x)-\ox\right)^2 \exp(\norma{\A{f}}_{\ast}) 
\end{align*}

{\textrm (ii)} By mean value theorem, for any entry $x$ of $\x$ there exists $\beta_x \in (\ox,x)$ satisfying
$$\int_\ox^{x} \tfrac12 (x-t)^2 df''(t)=\frac{f''(\beta_x)}{2} \int_{\ox}^x  (x-t)^2 dx.$$
Applying mean value theorem again, there exists a universal $\beta \in (\min \x, \max \x)$ satisfying
$$\paramA{ \frac{f''(\beta_\x)}2 \int_\ox^{\x}  (\x-t)^2 dt}=\frac{f''(\beta)}2 \paramA{  \int_\ox^{\x}  (\x-t)^2 dt}.$$
Lastly
\begin{align*}
\abs{\frac1{f'(\ox)}\paramA{ \int_\ox^{\x} \tfrac12 (\x-t)^2 df''(t)}} &= \abs{\frac1{f'(\ox)}\paramA{ \frac{f''(\beta_\x)}2 \int_\ox^{\x}  (\x-t)^2 dt}} \\
&= \abs{\frac{f''(\beta)}{2f'(\ox)} \paramA{ \int_\ox^{\x}  (\x-t)^2 dt}}\\
&= \abs{\frac{f''(\beta)}{6f'(\ox)} \paramA{ (\x-\ox)^3}}\\
&\le \tfrac16  \abs{\frac{f''(\beta)}{f'(\ox)}}\paramA{\abs{\x-\ox}^3}\\
&= \tfrac16 \abs{\frac{f''(\beta)}{f'(\beta)}} \abs{\frac{f'(\beta)}{f'(\ox)}}\paramA{\abs{\x-\ox}^3}\\
&\le \tfrac16  \cdot K \cdot \exp(\norma{\A{f}}_{\ast})\cdot \paramA{\abs{\x-\ox}^3}
\end{align*}
\end{proof}

Now, by applying Lemma~\ref{lem:right_major} to Lemma~\ref{lem:postacsrf}, we obtain the following
\begin{cor}
\label{cor:estim_ox_srf}
Let $I$ be an interval, $f \in \Sm(I)$ and $\x \in I^k$ for some $k \in \N$, $\norma{\A{f}}_{\infty}<K$. Then
$$\abs{\sr{f}(\x)-\ox - \tfrac12 \Var(\x) \A f(\ox)}<
K \cdot \exp(\norma{\A{f}}_{\ast}) \cdot \left((\sr{f}(\x) - \ox)^2  + \tfrac16  \paramA{\abs{\x-\ox}^3} \right)$$
\end{cor}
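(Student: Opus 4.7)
The proof is essentially a direct combination of the two preceding results, so the plan is quite short. First I would invoke Lemma~\ref{lem:postacsrf} and rewrite the identity as
\begin{align*}
\sr{f}(\x) - \ox - \tfrac12 \Var(\x) \A{f}(\ox)
&= \frac{1}{2f'(\ox)} \paramA{\int_{\ox}^{\x} (\x-t)^2 \, df''(t)} \\
&\quad + \int_{\ox}^{\sr{f}(\x)} \frac{\bigl(f(u)-f(\sr{f}(\x))\bigr) f''(u)}{f'(u)^2}\, du,
\end{align*}
isolating precisely the expression whose absolute value we wish to estimate.

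Next I would note that the hypothesis $\norma{\A{f}}_{\infty} < K$ implies $f \in \Sm_K(I)$, so Lemma~\ref{lem:right_major} is applicable. Applying the triangle inequality to the right-hand side above and then feeding each of the two summands into parts (i) and (ii) of Lemma~\ref{lem:right_major} respectively yields
$$\abs{\sr{f}(\x) - \ox - \tfrac12 \Var(\x) \A{f}(\ox)} < K \exp(\norma{\A{f}}_{\ast})(\sr{f}(\x) - \ox)^2 + \tfrac16 K \exp(\norma{\A{f}}_{\ast}) \paramA{\abs{\x-\ox}^3}.$$
Factoring out the common term $K \exp(\norma{\A{f}}_{\ast})$ gives exactly the asserted inequality, with strictness inherited from Lemma~\ref{lem:right_major}(i).

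Since no new analytic work is required beyond the rearrangement and two triangle inequality applications, there is no real obstacle here; the corollary is just the bookkeeping that packages Lemmas~\ref{lem:postacsrf} and~\ref{lem:right_major} into a form suitable for later use. The only minor point to be careful about is aligning the factors of $\tfrac12$ between the two lemmas, namely recognizing that $\frac{1}{2f'(\ox)} \paramA{\int_{\ox}^{\x} (\x-t)^2 df''(t)}$ and $\frac{1}{f'(\ox)} \paramA{\int_{\ox}^{\x} \tfrac12 (\x-t)^2 df''(t)}$ agree, so that Lemma~\ref{lem:right_major}(ii) bounds precisely the third term appearing in Lemma~\ref{lem:postacsrf}.
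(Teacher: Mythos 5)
Your proposal is correct and follows exactly the paper's intended argument: the corollary is stated immediately after the remark that it follows ``by applying Lemma~\ref{lem:right_major} to Lemma~\ref{lem:postacsrf},'' which is precisely your triangle-inequality combination of the two, and your observation that $\tfrac{1}{2f'(\ox)}\paramA{\int_{\ox}^{\x}(\x-t)^2\,df''(t)}$ and $\tfrac{1}{f'(\ox)}\paramA{\int_{\ox}^{\x}\tfrac12(\x-t)^2\,df''(t)}$ coincide is the only bookkeeping point to watch.
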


Therefore, the value of quasi-arithmetic mean could be approximated $\sr{f}(\x) \approx \ox + \tfrac12 \Var(\x) \A f(\ox)$. Such an informal expression could be predicted much earlier - after Proposition~\ref{prop:basiccompare}. The only parameters to calculate was $\tfrac12 \Var(\x)$ multiplying $\A f(\ox)$ and the majorization of error, which was the most difficult part.

In this moment we reiterate that our aim is to describe whole family [in particular all results concerning means its built up] by a single parameter - $K$. 

If the difference between maximal and minimal entry of vector $\x$ is small enough we would like to approximate $\sr{f}(\x) \approx \ox$ (Lemma~\ref{lem:4}). If this difference is too big, we will use property~\eqref{eq:property:Sm_K_def} to decrease it (Lemma~\ref{lem:5}) - it is applicable for any vector but gives worse estimation. The main idea of the proof of Theorem~\ref{thm:suche} is to apply Lemma~\ref{lem:5} by a number of steps to fulfilled the assumption of Lemma~\ref{lem:4} and later apply this lemma.

\begin{lem}
\label{lem:4}
Let $I$ be an interval, $f \in \Sm_K(I)$ and $\x \in I^k$ for some $k \in \N$ and $K \in (0,+\infty)$.

If $\max \x-\min \x<\min(1/K,1)$ then
$$\abs{\sr{f}(\x)-\ox} < \tfrac\alpha2  \cdot K\cdot (\max \x-\min \x)^2,$$
where $\alpha=\tfrac{3+7e}3$.
\end{lem}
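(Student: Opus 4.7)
The plan is to reduce the estimate to Corollary~\ref{cor:estim_ox_srf} and to control each of the three quantities that appear there in terms of $d:=\max\x-\min\x$.

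As a preliminary step I would shrink $I$ to $[\min\x,\max\x]$, as permitted by Section~\ref{sec:pos_ref}; after this reduction one has $\norma{\A f}_\ast \le K\cdot |I| = Kd \le 1$, so the exponential factor in Corollary~\ref{cor:estim_ox_srf} is at most $e$. This is the one place where the hypothesis $d<1/K$ is really used, and I expect it to be the main conceptual step of the proof -- without it, Corollary~\ref{cor:estim_ox_srf} carries an a priori uncontrolled factor $\exp(\norma{\A f}_\ast)$ and the argument collapses.

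With the exponential tamed, I would feed the bound into Corollary~\ref{cor:estim_ox_srf} and estimate the three remaining quantities by brute force. Since both $\sr{f}(\x)$ and $\ox$ lie in $[\min\x,\max\x]$, I immediately obtain $(\sr{f}(\x)-\ox)^2\le d^2$. Similarly $\abs{\x_i-\ox}\le d$ gives $\paramA{\abs{\x-\ox}^3}\le d^3$ and $\Var(\x)\le d^2$, while $f\in\Sm_K$ yields $\abs{\A f(\ox)}\le K$, so the leading ``predicted'' term satisfies $\tfrac12\Var(\x)\abs{\A f(\ox)}\le \tfrac12 Kd^2$. A triangle inequality then gives
\[\abs{\sr{f}(\x)-\ox} < \tfrac12 K d^2 + K e\,d^2 + \tfrac{K e}{6} d^3.\]

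Finally, the other half of the hypothesis, $d<1$, ensures $d^3\le d^2$, and collecting terms on the right-hand side produces $K d^2\bigl(\tfrac12+e+\tfrac{e}{6}\bigr) = \tfrac{3+7e}{6}Kd^2 = \tfrac{\alpha}{2}Kd^2$, which is exactly the claimed bound (and it is strict, inherited from the strict inequality in Corollary~\ref{cor:estim_ox_srf}). The two hypotheses $d<1/K$ and $d<1$ thus play complementary roles: the first tames the exponential factor produced by Corollary~\ref{cor:estim_ox_srf}, and the second absorbs the cubic remainder into the leading quadratic term. Everything after the initial reduction is routine bookkeeping.
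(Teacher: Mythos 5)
Your argument is correct and follows essentially the same route as the paper's own proof: restrict $f$ to $J=[\min\x,\max\x]$ so that $\norma{\A f}_\ast\le K\delta\le 1$ and the exponential in Corollary~\ref{cor:estim_ox_srf} is bounded by $e$, bound $\Var(\x)$, $(\sr{f}(\x)-\ox)^2$ and $\paramA{\abs{\x-\ox}^3}$ by powers of $\delta$, and use $\delta<1$ to absorb the cubic term into the quadratic, yielding $\tfrac{3+7e}{6}K\delta^2=\tfrac{\alpha}{2}K\delta^2$. The bookkeeping and the role you assign to each half of the hypothesis match the paper exactly.
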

\begin{proof}

Let $\delta:=\max \x-\min \x$.
By the definition $$\Var(\x)=\paramA{(\x-\ox)^2}<\delta^2.$$ Similarly $(\sr{f}(\x) - \ox)^2<\delta^2$ and  $\paramA{\abs{\x-\ox}^3}<\delta^3$.

We will restrict interval $I$ to $J:=[\min \x, \max \x] \subset I$. Then we consider $h=f \vert_{J} \in \Sm_K(J)$,  $\norma{\A{h}} \le \delta K$.
By Corollary~\ref{cor:estim_ox_srf} applied to $h$, we obtain
\begin{align*}
&\quad \abs{\sr{f}(\x)-\ox} = \abs{\sr{h}(\x)-\ox}\\
&<\tfrac12 \Var(\x) \abs{\A h(\ox)}+
K \cdot \exp(\norma{\A{h}}_{\ast}) \cdot \left((\sr{h}(\x) - \ox)^2  + \tfrac16  \paramA{\abs{\x-\ox}^3} \right)\\
&< \tfrac12 \delta^2 K+ K \cdot e^{\delta K} \left(\delta^2  + \tfrac16  \delta^3 \right)\\
&\le \tfrac12 \delta^2 K+ K \cdot \tfrac{7e}6 \delta^2\\
&\le \tfrac{3+7e}6 \delta^2 K
\end{align*}
\end{proof}

\begin{lem}
\label{lem:5}
Let $k \in \N$, $\x \in \R^k$ and $K>0$. Then 
$$\exp\big(K\cdot(\srE{K}(\x)-\srE{-K}(\x))\big) -1 \le \tfrac12 \Big(\exp\big(K\cdot(\max \x - \min \x)\big)-1\Big).$$
\end{lem}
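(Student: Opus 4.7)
The plan is to unfold the definitions of $\srE{K}$ and $\srE{-K}$ and recast the left-hand side as a symmetric double sum. Using $p\cdot\srE{p}(\x)=\ln(\tfrac1k\sum_i e^{p\x_i})$ for $p\ne 0$, a direct manipulation gives
$$K\bigl(\srE{K}(\x)-\srE{-K}(\x)\bigr)=\ln\!\left(\tfrac1k\sum_i e^{K\x_i}\right)+\ln\!\left(\tfrac1k\sum_j e^{-K\x_j}\right)=\ln\!\left(\frac{1}{k^2}\sum_{i,j}e^{K(\x_i-\x_j)}\right),$$
so after exponentiating and subtracting $1$ the left-hand side of the lemma becomes
$$\frac{1}{k^2}\sum_{i,j}\bigl(e^{K(\x_i-\x_j)}-1\bigr).$$

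Next I would exploit the symmetry of this double sum by pairing the index $(i,j)$ with $(j,i)$. The diagonal terms vanish and the remaining off-diagonal pairs combine through the identity $(e^{a}-1)+(e^{-a}-1)=2(\cosh a-1)$, giving
$$\frac{2}{k^2}\sum_{i<j}\bigl(\cosh(K(\x_i-\x_j))-1\bigr).$$
Setting $D:=K(\max\x-\min\x)$, every $K|\x_i-\x_j|$ lies in $[0,D]$, and since $\cosh$ is even and nondecreasing on $[0,\infty)$ each summand is at most $\cosh D-1$. Counting the $\binom{k}{2}$ pairs bounds the whole expression by $\tfrac{k-1}{k}(\cosh D-1)$.

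To finish, I would compare $\cosh D-1$ with $\tfrac12(e^D-1)$: the inequality $\cosh D-1\le\tfrac12(e^D-1)$ reduces to $e^D+e^{-D}-2\le e^D-1$, i.e.\ $e^{-D}\le 1$, which holds for all $D\ge 0$. Combined with $\tfrac{k-1}{k}\le 1$, this delivers the stated bound. I do not anticipate a serious obstacle; the only real move is the symmetrization that recasts exponentials as hyperbolic cosines, after which the remaining estimates are elementary.
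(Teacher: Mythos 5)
Your proof is correct and takes essentially the same route as the paper: both reduce the left-hand side to the double sum $\tfrac{1}{k^2}\sum_{i,j}\bigl(e^{K(\x_i-\x_j)}-1\bigr)$ and then bound the $\binom{k}{2}$ off-diagonal contributions against $\max\x-\min\x$. The only difference is cosmetic: the paper sorts the entries, discards the non-positive terms with $i\le j$, and bounds the remaining $\binom{k}{2}$ terms by $e^{KD}-1$, whereas you symmetrize the pair $(i,j)\leftrightarrow(j,i)$ into $2(\cosh(K(\x_i-\x_j))-1)$ and close with the elementary estimate $\cosh D-1\le\tfrac12(e^D-1)$; both moves give up the same $\tfrac{k-1}{k}$ slack.
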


\begin{proof}
Let us assume $\x_1 \le \x_2 \le \ldots \le \x_k$. Then, by simple transformations,
\begin{align*}
\srE{K}(\x)-\srE{-K}(\x)&=\tfrac1K \ln \left( \frac{\sum \exp(K\cdot \x_i)}k \frac{\sum \exp(-K\cdot \x_i)}k\right);\\
e^{K(\srE{K}(\x)-\srE{-K}(\x))}&=\frac{1}{k^2} \sum_{i=1}^k e^{K\cdot \x_i} \sum_{j=1}^k e^{-K\cdot \x_j}
=\frac{1}{k^2} \sum_{i=1}^k\sum_{j=1}^k e^{K\cdot (\x_i-\x_j)};\\
e^{K(\srE{K}(\x)-\srE{-K}(\x))}-1&=\frac{1}{k^2} \sum_{i=1}^k\sum_{j=1}^k (e^{K\cdot (\x_i-\x_j)}-1).
\end{align*}
Now we may omit $(e^{K\cdot (\x_i-\x_j)}-1)$ for $i \le j$ - these elements are non-positive so the sum does not decrease. 
Later we will majorize $\x_i-\x_j \le \max \x - \min \x$.
\begin{align*}
e^{K(\srE{K}(\x)-\srE{-K}(\x))}-1 &\le\frac{1}{k^2} \sum_{i>j} (e^{K\cdot (\x_i-\x_j)}-1)\\
&\le\frac{1}{k^2} \frac{k(k-1)}2(e^{K\cdot (\max \x - \min \x)}-1)\\
&\le \tfrac{1}{2} (e^{K\cdot (\max \x - \min \x)}-1)\\
\end{align*}

\end{proof}

\section{\label{sec:proofthm} Proofs of Theorem~\ref{thm:suche} and Theorem~\ref{thm:main}}

\subsection{Proof of Theorem~\ref{thm:suche}}

Let 
$\x \in I^k$ and, by section~\ref{sec:k=1}, $f_i \in \Sm_1(I)$ for any $i \in \{1,\ldots,k\}$. By \eqref{eq:property:Sm_K_def} and Lemma~\ref{lem:5} we have
\begin{align*}
\exp\left(K\left(\max \bA(\x)-\min \bA(\x)\right)\right)-1  
&\le \exp (\srE{1}(\x)-\srE{-1}(\x)) -1 \\
&\le \tfrac{1}{2} \left(e^{\max \x-\min \x}-1\right).
\end{align*}

So, by simple induction, using definition of $n_0$, one has
$$\exp\left(\max \bA^{n_0}(\x)-\min \bA^{n_0}(\x)\right)-1 
\le \frac{1}{2^{n_0}} \left(e^{\max \x-\min \x}-1\right) \le e^l-1.$$
Whence 
\begin{equation}
\max \bA^{n_0}(\x)-\min \bA^{n_0}(\x)<l. \label{eq:ind_baza_thm:suche}
\end{equation}
Therefore the conjecture is satisfied for $n=n_0$. Moreover $l<1$ is
making Lemma~\ref{lem:4} applicable since $n_0$-th iteration. For $n \ge n_0$, we obtain
\begin{align}
\max \bA^{n+1}(\x)-\min \bA^{n+1}(\x)&\le \abs{\max \bA^{n+1}(\x)-\ox}+\abs{\ox-\min \bA^{n+1}(\x)} \nonumber \\
&\le \alpha \cdot  (\max \bA^{n}(\x)-\min \bA^{n}(\x))^2.\label{eq:ind_krok_thm:suche}
\end{align}
This inequality is related with \eqref{eq:est_AGM} for arithmetic-geometric means. 
By simple induction, using inequalities \eqref{eq:ind_baza_thm:suche} and \eqref{eq:ind_krok_thm:suche}, we obtain
$$\max \bA^{n}(\x)-\min \bA^{n}(\x)<\tfrac1{\alpha}(\alpha l)^{2^{n-n_0}} \text{ for any }n\ge n_0.$$


\subsection{Proof of Theorem~\ref{thm:main}}

Putting $l=\xi$ in Theorem~\ref{thm:suche} and recalling an assumption $K=1$, we get
\begin{align*}
n_{0}&= \left\lceil\log_2\left(\frac{\exp(\max \x - \min \x)-1}{e^\xi-1}\right)\right\rceil\\
&=\left\lceil\log_2\left(\exp(\max \x - \min \x)-1\right) -\log_2(e^\xi-1)\right\rceil\\
&\le \left\lceil\log_2\left(\exp(\max \x - \min \x)\right) -\log_2(e^\xi-1)\right\rceil\\
&= \left\lceil\log_2(e) \cdot (\max \x - \min \x) -\log_2(e^\xi-1)\right\rceil\\
&< \log_2(e) \cdot  (\max \x - \min \x) -\log_2(e^\xi-1)+1=:n_1.
\end{align*}
Approximately $n_1 \approx 1.4427 \cdot (\max \x - \min \x) +5.246$.

Then, for $n \ge n_1$ (simultaneously $n \ge n_0$),
\begin{align*}
\max \bA^{n}(\x)-\min \bA^{n}(\x)&<\frac1{\alpha}(\alpha l)^{2^{n-n_0}} \\
&<\frac1{\alpha}\left((\alpha \xi)^{e^\xi-1}\right)^{\frac{2^{n-1}}{\exp(\max \x - \min \x)-1}}\\
&=\frac1{\alpha}\left((\alpha \xi)^{(e^\xi-1)/2}\right)^{\frac{2^n}{\exp(\max \x - \min \x)-1}}\\
&=\frac1{\alpha}\mu^{\frac{2^n}{\exp(\max \x - \min \x)-1}}.
\end{align*}

\end{document}